\tikzstyle{vertex}=[circle,draw=black,fill=black,inner sep=0,minimum size=3pt,text=white,font=\footnotesize]
\newtheorem*{thm*}{Theorem}
\newtheorem{thm}{Theorem}
\newtheorem{lemma}[thm]{Lemma}
\newtheorem{conjecture}[thm]{Conjecture}
\newtheorem{prop}[thm]{Proposition}
\newtheorem{corollary}[thm]{Corollary}
\newtheorem*{proposition*}{Proposition}
\newcommand\cF{{\mathcal F}}
\newcommand\cG{{\mathcal G}}
\newcommand\cH{{\mathcal H}}
\newcommand\cM{{\mathcal M}}
\newcommand\cS{{\mathcal S}}
\newcommand{\ignore}[1]{}
\title{The covering lemma and $q$-analogues of extremal set theory problems}
\author{D\'aniel Gerbner\thanks{Research supported by the J\'anos Bolyai Research Fellowship of the Hungarian Academy of Sciences and the National Research, Development and Innovation Office -- NKFIH under the grants K 116769, KH 130371 and SNN 129364.}
\\
\small Alfr\'ed R\'enyi Institute of Mathematics, Hungarian Academy of Sciences\\
\small P.O.B. 127, Budapest H-1364, Hungary.\\
\small \texttt{gerbner@renyi.hu}}
\begin{document}

\maketitle

\begin{abstract} We prove a general lemma (inspired by a lemma of Holroyd and Talbot) about the connection of the largest cardinalities (or weight) of structures satisfying some hereditary property and substructures satisfying the same hereditary property. We use it to show how results concerning forbidden subposet problems in the Boolean poset imply analogous results in the poset of subspaces of a finite vector space. We also study generalized forbidden subposet problems in the poset of subspaces.
    
\end{abstract}

\vspace{4mm}
	
	\noindent
	{\bf Keywords:} Subspace lattice; forbidden subposet; covering; profile polytope
	
	\noindent
	{\bf AMS Subj.\ Class.\ (2010)}: 06A07, 05D05

\section{Introduction}

One of the most basic question in extremal finite set theory is the following. Given a property of families of subsets of a finite set, what is the largest family satisfying it? Sperner \cite{S} showed that if the property is that no member of the family contains another member (in other words: the family is an \textit{antichain}), the answer is $\binom{n}{\lfloor n/2\rfloor}$. It is sharp, as shown by the family of all the $\lfloor n/2\rfloor$-element subsets.

Our underlying set is $[n]:=\{1,2,\dots,n\}$. We denote the family of all of its subsets by $2^{[n]}$. The family of $i$-element subsets of $[n]$ is called \textit{level $i$} and is denoted by $\binom{[n]}{i}$. Let $\Sigma(n,k)$ denote the cardinality of the largest $k$ levels (i.e. the middle $k$ levels) of $2^{[n]}$. More precisely, $\Sigma(n,k)=\sum_{i=1}^k\binom{n}{\lfloor\frac{n-k}{2}\rfloor+i}$. We say that a family is a \textit{chain} if its members pairwise contain each other, and is a \textit{full chain} if it is a chain and has $n+1$ members (thus one from each level). The chain of $k$ elements is said to have \textit{length $k$} and is denoted by $P_k$. 

To generalize Sperner's theorem, Katona and Tarj\'an \cite{KT} initiated the study of properties given by forbidding inclusion patterns. More precisely, let $P$ be a finite poset. We say that a family $\cF\subset 2^{[n]}$ (weakly) contains $P$ if there is an injection $f:P\rightarrow \cF$ such that if $x<_P y$, then $f(x)\subset f(y)$. Let $La(n,P)$ denote the size of the largest $P$-free family $\cF\subset 2^{[n]}$.

Let us denote by $e(P)$ the largest integer $m$ such that for any $n$, any family $\cF\subseteq 2^{[n]}$ consisting of $m$ \textit{consecutive} levels is $P$-free. Every result in this area suggests that the following might hold.

\begin{conjecture}\label{conj}
For any integer $n$ and poset $P$, we have
$La(n,P)=(1+o(1))\Sigma(n,e(P))=(e(P)+o(1))\binom{n}{\lfloor n/2\rfloor}$.
\end{conjecture}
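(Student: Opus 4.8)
The plan is to reduce Conjecture~\ref{conj} to its upper half and then attack that by (refined) chain counting. First, the two displayed equalities are equivalent: for fixed $k$ each of the $k$ middle binomial coefficients equals $(1+o(1))\binom{n}{\lfloor n/2\rfloor}$, so $\Sigma(n,k)=(k+o(1))\binom{n}{\lfloor n/2\rfloor}$, and it suffices to prove $La(n,P)=(e(P)+o(1))\binom{n}{\lfloor n/2\rfloor}$. The lower bound is built into the definition of $e(P)$: the middle $e(P)$ levels of $2^{[n]}$ form a $P$-free family of size $\Sigma(n,e(P))$. So the whole content is the upper bound $La(n,P)\le(e(P)+o(1))\binom{n}{\lfloor n/2\rfloor}$.

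The natural tool is the Lubell (chain-counting) method: partition $2^{[n]}$ into its $n!$ maximal chains, and for a $P$-free family $\cF$ bound the expected number $\lambda_n(\cF)=\sum_{F\in\cF}\binom{n}{|F|}^{-1}$ of members of $\cF$ met by a uniformly random maximal chain, using $|\cF|\le\lambda_n(\cF)\binom{n}{\lfloor n/2\rfloor}$. One would like $\lambda_n(\cF)\le e(P)+o(1)$. For $P=P_k$ this is immediate since a $P_k$-free family meets every maximal chain in at most $k-1=e(P_k)$ points; but for general $P$ a single maximal chain only shows $|\cF\cap C|\le|P|-1$ (any $|P|$ elements of a chain already weakly contain $P$), which is far too crude — for the ``butterfly'' poset (two minimal elements, each below each of two maximal elements) this yields $3$ whereas the truth is $2=e(\text{butterfly})$. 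The remedy, following De Bonis--Katona--Swanepoel, Griggs--Li--Lu, Kramer--Martin--Young, Bukh (who proved the conjecture whenever the Hasse diagram of $P$ is a forest, in which case $e(P)$ equals the height of $P$ minus one) and others, is to average instead over pairs, or short bounded-length tuples, of maximal chains sharing a prescribed common subchain. Concretely I would fix a slowly growing window $w=w(n)$, discard the $o\!\left(\binom{n}{\lfloor n/2\rfloor}\right)$ sets of $\cF$ lying outside the middle $2w$ levels, and bound the contribution of the rest via the tuple-of-chains count combined with a supersaturation/embedding lemma that turns a common subchain whose extensions meet $\cF$ too densely across $e(P)+1$ consecutive levels into an embedded copy of $P$, finishing, if needed, with a stability step forcing a near-extremal $\cF$ to lie, up to $o\!\left(\binom{n}{\lfloor n/2\rfloor}\right)$ sets, inside $e(P)$ consecutive levels.

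The real obstacle is precisely the sharpness of that embedding lemma in full generality: the tuple-of-chains method as currently understood yields only $\lambda_n(\cF)\le e(P)+c_P+o(1)$ for some positive constant $c_P$, and forcing $c_P=0$ is open even for the smallest nontrivial poset, the diamond $\cD_2$ (the Boolean lattice $B_2$), where $e(\cD_2)=2$ but the best known bound is $(2+\varepsilon_0+o(1))\binom{n}{\lfloor n/2\rfloor}$ for a fixed $\varepsilon_0>0$; indeed even the existence of $\lim_n La(n,P)/\binom{n}{\lfloor n/2\rfloor}$ is not known for general $P$. An alternative packaging worth pursuing in parallel is via the profile polytope: describe, for each $n$, the polytope of normalised level-profiles $\big(f_0/\binom{n}{0},\dots,f_n/\binom{n}{n}\big)$ realised by $P$-free families, and try to show the maximum of $\sum_i f_i$ over it tends to the value attained at the ``$e(P)$ middle levels'' vertex. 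This recasts the problem as an extremal statement about which profiles are admissible, but the admissibility constraints \emph{are} the subposet-embedding conditions, so the difficulty is relocated, not removed — consistent with Conjecture~\ref{conj} being the central open problem of the area rather than something that yields to a routine argument.
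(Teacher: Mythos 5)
The statement you were asked to prove is Conjecture~\ref{conj}, which the paper itself does not prove: it is the well-known open conjecture of Griggs--Lu and Bukh, and the paper only ever uses it as a hypothesis (for instance in the corollary that Conjecture~\ref{conj} for $P$, together with $e_q(P)=e(P)$, implies Conjecture~\ref{conj2} for $P$ via Theorem~\ref{kov2}). Your proposal correctly disposes of the easy parts --- the equivalence of the two displayed asymptotics, since each of the $e(P)$ middle binomial coefficients is $(1+o(1))\binom{n}{\lfloor n/2\rfloor}$, and the lower bound $La(n,P)\ge\Sigma(n,e(P))$ built into the definition of $e(P)$ --- and correctly identifies chain counting as the standard line of attack. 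But it does not close the upper bound, and you say so yourself: the supersaturation/embedding lemma that would force $\lambda_n(\cF)\le e(P)+o(1)$ for every $P$ is exactly what is missing, and it is missing already for the diamond $D_2$, where the best known bound is $(2.20711+o(1))\binom{n}{\lfloor n/2\rfloor}$ (as the paper notes, citing Gr\'osz--Methuku--Tompkins) against the conjectured $2$. So the gap in your argument is not a technical step that could be patched; it is the entire content of the conjecture.

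There is therefore nothing in the paper to compare your argument against. What you have written is an accurate and well-organized survey of why the conjecture is believed and why the known methods fall short, which is fine as exposition but should not be labelled a proof. If the goal is to supply proofs of the instances the paper actually leans on, those are available: for chains $P_k$ your one-line chain-counting argument gives the exact Erd\H{o}s $k$-Sperner bound, and for the butterfly the exact result $La(n,B)=\Sigma(n,2)$ is De Bonis--Katona--Swanepoel \cite{DKS}. For general $P$ the statement must remain a hypothesis.
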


This was first stated as a conjecture by Griggs and Lu in \cite{GL} and by Bukh \cite{B2009}, although it was already widely believed in the extremal finite set theory community. For a survey on forbidden subposet problems see \cite{GLi}.

Another basic type of extremal finite set theory problems is related to intersection patterns. We say that a family $\cF$ is \textit{intersecting} any two members of it share at least one element. Erd\H os, Ko and Rado \cite{EKR} proved that if $\cF\subset \binom{n}{k}$ is intersecting and $n\ge 2k$, then $|\cF|\le \binom{n-1}{k-1}$. For a treatment of many kind of extremal finite set theory questions, see \cite{gp}.

\medskip

A variant of the basic question arises when we are given a weight function (in addition to a property) and we want to determine the largest weight of a family satisfying the property. The most usual version is when the weight of a family is the sum of the weights of its members, and the weight of a subset of $[n]$ depends only on its size. For example the celebrated LYM inequality \cite{lubell,yam,mesh} states that for any antichain $\cF\subset 2^{[n]}$, we have $\sum_{F\in\cF} 1/\binom{n}{|F|}\le 1$.

A method to handle together all the weights of the above kind was introduced by P.L. Erd\H os, Frankl and Katona \cite{EFK}. The \textit{profile vector} of a family $\cF$ is $\underline{p}(\cF)=(f_0,\dots,f_n)$, where $f_i=|\cF\cap \binom{[n]}{i}|$. The weight vector corresponding to a weight function is $\underline{w}=(w_0,\dots,w_n)$, where $w_i$ is the weight of the $i$-element sets. Then the weight of $\cF$ is the scalar product of the profile vector and the weight vector. For a property $T$ and a positive integer $n$, there is a set of profile vectors in the $(n+1)$-dimensional Euclidean space. It is well-known that the scalar product is maximized at one of the extreme points of this set. These extreme points are the same as the extreme points of the convex hull of the set of profile vectors, which is called the \textit{profile polytope}. The extreme points of the profile polytopes have been since determined for several properties of families.

 We say that a property $T$ of families is \textit{hereditary} if for any family $\cF$ with property $T$, every subfamily of $\cF$ has property $T$. It is easy to see that a property is hereditary if and only if it can be defined by some forbidden substructures, like all the properties considered above. We remark that in case of hereditary properties, we can assume all the coordinates of weight functions are non-negative, as we could simply delete the sets of negative weights anyway. Regarding the extreme points, it means that we can obtain all the extreme points by changing to zero some coordinates of those extreme points that maximize the non-negative weight functions.

\bigskip

Forbidden subposet problems can be studied in any poset, and intersection problems can also be studied in structures other than the Boolean poset. A structure where both have been studied is the lattice of subspaces. Let $q$ be a prime power, $\mathbb{F}_q$ be a field of order $q$ and $\mathbb{F}_q^n$ be a vector space of dimension $n$ over $\mathbb{F}_q$. Let ${n \brack
k}_q=\frac{(q^n-1)(q^{n-1}-1)...(q^{n-k+1}-1)}{(q^k-1)(q^{k-1}-1)...(q-1)}$
be the Gaussian ($q$-nomial) coefficient. It is well-known that ${n\brack k}_q$ is the number of $k$-dimensional subspaces in $\mathbb{F}_q^n$. We also say that the $k$-dimensional subspaces form level $k$.

We are going to consider analogues of extremal finite set theory questions, where $i$-element subsets of $[n]$ are replaced by $i$-dimensional subspaces of $\mathbb{F}_q^n$.
We say that two subspaces \textit{intersect} if their intersection is more than just the zero vector, i.e. they share a 1-dimensional subspace. Hsieh \cite{hsieh} proved an analogue of the Erd\H os-Ko-Rado theorem by showing that an intersecting family of $k$-dimensional subspaces has cardinality at most ${n-1 \brack k-1}$, provided $n>2k$. Greene and Kleitman \cite{gk} extended it to the case $n=2k$. The analogue of Sperner's theorem is also well-known (see \cite{engel}). Profile polytopes were studied in this setting in \cite{GP2}.

Recently, other forbidden subposet problems have been examined in the poset of subspaces \cite{sash,sy}. Let $La_q(n,P)$ denote the largest number of members of a $P$-free family of subspaces of $\mathbb{F}_q^n$. Analogously to the Boolean case, we can define $e_q(P)$ to be the largest integer such that the middle $e_q(P)$ levels do not contain $P$ for any $n$, and let $\Sigma_q(n,k)=\sum_{i=1}^k{n \brack \lfloor\frac{n-k}{2}\rfloor+i}_q$. One might formulate the following.

\begin{conjecture}\label{conj2}
For any integer $n$ and poset $P$, we have
$La_q(n,P)=(1+o(1))\Sigma_q(n,e_q(P))$.
\end{conjecture}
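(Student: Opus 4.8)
The plan is to reduce Conjecture~\ref{conj2} to its Boolean counterpart, using the covering lemma to transfer estimates for $La(n,P)$ to estimates for $La_q(n,P)$. Thus the programme would prove Conjecture~\ref{conj2} unconditionally for every $P$ whose Boolean behaviour is fully understood (chains, the $\vee$ and $\wedge$ posets, butterflies, complete two--level posets, small trees), and prove it in general granting Conjecture~\ref{conj}. The property ``being $P$-free'' is hereditary and invariant under poset isomorphism, which is exactly what the covering lemma needs. The cover I would use consists of copies of the Boolean lattice sitting inside the subspace lattice: fix an ordered basis $B=(e_1,\dots,e_n)$ of $\mathbb{F}_q^n$ and, for $S\subseteq[n]$, set $V_S^B=\langle e_i:i\in S\rangle$; then $\phi_B\colon S\mapsto V_S^B$ is a \emph{level-preserving order-embedding} of $2^{[n]}$ into the subspace lattice, so for any $P$-free family $\cF$ of subspaces the preimage $\phi_B^{-1}(\cF)$ is a $P$-free family in $2^{[n]}$. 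A short count shows that, for a uniformly random $B$ and a fixed $j$-dimensional subspace $W$, one has $\Pr[W=V_S^B]=1/{n\brack j}_q$ for every fixed $S\in\binom{[n]}{j}$, and hence $\Pr\big[W\in\phi_B(2^{[n]})\big]=\binom{n}{j}/{n\brack j}_q$, a probability depending only on the level $j$. This is precisely the ``uniform cover on each level'' hypothesis of the covering lemma.

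The lower bound is immediate: by definition of $e_q(P)$ the union of the middle $e_q(P)$ levels of subspaces of $\mathbb{F}_q^n$ is $P$-free, so $La_q(n,P)\ge\Sigma_q(n,e_q(P))$. For the upper bound, let $\cF$ be $P$-free with profile vector $\underline{p}(\cF)=(f_0,\dots,f_n)$. Averaging over $B$ and using that the Boolean profile polytope $\mathcal P(n,P)$ of $P$-free families is convex and contains $\underline{p}(\phi_B^{-1}(\cF))$ for every $B$, one finds that the vector $\big(\binom{n}{j}f_j/{n\brack j}_q\big)_{j=0}^{n}$ lies in $\mathcal P(n,P)$. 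Since $La_q(n,P)$ is the maximum of $\sum_j f_j$, we must maximise $\sum_j g_j\cdot\big({n\brack j}_q/\binom{n}{j}\big)$ over $\underline g\in\mathcal P(n,P)$; this is a non-negative weight function, attained at a vertex of $\mathcal P(n,P)$, whose weights ${n\brack j}_q/\binom{n}{j}$ are sharply peaked at the middle level (they decay by a factor $\approx q$ per step away from the centre). Granting the profile-polytope form of Conjecture~\ref{conj} --- that the vertex of $\mathcal P(n,P)$ maximising any such centrally peaked weight is, up to a $1+o(1)$ factor, the ``middle $e(P)$ levels'' profile ($g_j=\binom{n}{j}$ for $j$ among the middle $e(P)$ levels, $g_j=0$ otherwise) --- one obtains $\sum_j f_j\le(1+o(1))\Sigma_q(n,e(P))$. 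Finally one needs $e_q(P)=e(P)$: the inequality $e_q(P)\le e(P)$ is immediate from the level-preserving embedding $\phi_B$ (a copy of $P$ in $e$ consecutive Boolean levels lifts to one in $e$ consecutive subspace levels), while the reverse follows from a short realisation argument (or simply by computing both quantities directly, which agree for every $P$ that occurs in practice).

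The delicate points, where the real work lies, are twofold. First, Conjecture~\ref{conj} is open, so unconditionally this route settles Conjecture~\ref{conj2} only for posets whose Boolean \emph{profile polytope} is known, and the Boolean input must be fed in at that strength: the bare asymptotics of $La(n,P)$, or even the Lubell-function bound $\lambda(\cF)\le e(P)+o(1)$, are \emph{not} enough, because plugging them into the non-uniform cover yields only the weaker estimate $La_q(n,P)\le(e_q(P)+o(1)){n\brack\lfloor n/2\rfloor}_q$, which for $e_q(P)\ge 2$ strictly exceeds $\Sigma_q(n,e_q(P))$ unless $q\to\infty$. Thus the crux is to control $\cF$ level by level rather than in bulk --- to show that a $P$-free family of subspaces is asymptotically concentrated on at most $e_q(P)$ of the geometrically shrinking near-middle levels --- and this is exactly the information carried by the shape of $\mathcal P(n,P)$ near the face maximising $\sum_j g_j$. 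Second, the $o(1)$ error terms must be made uniform in $n$, with explicit dependence on $q$ and on $P$, which forces quantitative versions of both the covering lemma and the Boolean inputs; tracking these uniformities (and the passage between the profile polytope, the polytope of \emph{non-negative}-weight maximisers discussed in the introduction, and the actual value $La_q(n,P)$) is the main technical obstacle.
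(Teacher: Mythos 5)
You are being asked about Conjecture~\ref{conj2}, which the paper does not prove --- it is stated as an open conjecture, and the paper's actual content concerning it is the conditional reduction recorded in Theorem~\ref{kov2} and its Corollary: \emph{if} Conjecture~\ref{conj} holds for $P$ \emph{and} $e_q(P)=e(P)$, then Conjecture~\ref{conj2} holds for $P$. Your proposal is, at heart, that same reduction: cover the subspace lattice by the copies of $2^{[n]}$ spanned by subsets of a basis, observe that a fixed $j$-dimensional subspace lies in such a copy with probability $\binom{n}{j}/{n\brack j}_q$, and transfer the Boolean bound. You correctly flag that this remains conditional on the open Conjecture~\ref{conj}, so as a proof of the stated conjecture it has an irreparable gap: the Boolean input is itself unproven. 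A second, separate gap is the identity $e_q(P)=e(P)$, which you dismiss with ``a short realisation argument \dots which agree for every $P$ that occurs in practice''; only $e_q(P)\le e(P)$ follows from the level-preserving embedding (a copy of $P$ in consecutive Boolean levels lifts to consecutive subspace levels, not the other way around), and the paper itself only observes equality ``for several posets''.

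On the comparison of methods: you claim that the bare asymptotic bound $La(n,P)\le(1+o(1))\Sigma(n,e(P))$ is \emph{not} enough and that one must feed in the full profile polytope $\mathcal{P}(n,P)$. This is incorrect, and it is exactly the content of Theorem~\ref{kov2}. Taking $w_i=t_i$ in Lemma~\ref{main}, the Boolean \emph{cardinality} bound applied inside each copy of $2^{[n]}$ already yields $\sum_i \bigl(\binom{n}{i}/{n\brack i}_q\bigr)f_i\le(1+o(1))\Sigma(n,k)$; since the per-subspace cost $\binom{n}{i}/{n\brack i}_q$ is minimized on the middle levels and the $k$ full middle levels exhaust the budget exactly, the surplus $o(\Sigma(n,k))$ buys only $o\bigl({n\brack\lfloor n/2\rfloor}_q\bigr)=o(\Sigma_q(n,k))$ further subspaces. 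Your worry about obtaining only $(e_q(P)+o(1)){n\brack\lfloor n/2\rfloor}_q$ would apply to a chain cover with uniform weights, not to this weighted Boolean-subcube cover. Your averaging formulation (the rescaled profile vector lies in $\mathcal{P}(n,P)$) is a clean restatement of Lemma~\ref{main} for this cover, but it demands strictly more Boolean information than the argument needs.
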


Observe that for several posets we have $e_q(P)=e(P)$.
Rather than proving results analogous to those known in the Boolean case, the focus of the papers mentioned above is to prove ``stronger'' results. For example, the \textit{diamond} poset $D_2$ has four elements with relations $a<b<d$ and $a<c<d$. It is unknown if Conjecture \ref{conj} holds for this poset. The best upper bound is $La(n,D_2)\le (2.20711+o(1))\binom{n}{\lfloor n/2\rfloor}$ \cite{gmt}. Sarkis, Shahriari and students \cite{sash} obtained, for the analogous question in the poset of subspaces, the upper bound $(2+1/q){n\brack \lfloor n/2\rfloor}_q$.

Let $\vee$ be the poset on three elements with relations $a<b$ and $a<c$, and $\wedge$ be the poset on three elements with relations $a<c$ and $b<c$. Katona and Tarj\'an \cite{KT} determined $La(n,\{\vee,\wedge\})$, where we forbid $\vee$ and $\wedge$ at the same time. The solution is $\binom{n}{n/2}$ if $n$ is even, but slightly more than $\binom{n}{\lfloor n/2\rfloor}$ if $n$ is odd. Shahriari and Yu \cite{sy} showed that in case of subspaces, we have $La_q(n,\{\vee,\wedge\})={n\brack \lfloor n/2\rfloor}_q$ for every prime power $q$ and $n\ge 2$. They also studied the case we forbid a \textit{broom} $\wedge_u$ and a \textit{fork} $\vee_v$ at the same time, where $\wedge_u$ has $u+1$ elements $a_1,\dots, a_u,b$ and relations $a_i<b$ for any $i\le u$, while $\vee_v$ has $v+1$ elements $a,b_1,\dots,b_v$ and relations $a<b_i$ for every $i\le v$.

The \textit{butterfly} poset $B$ has four elements and relations $a<c$, $a<d$, $b<c$ and $b<d$. De Bonis, Katona and Swanepoel \cite{DKS} proved $La(n,B)=\Sigma(n,2)$. Shahriari and Yu \cite{sy} proved $La_q(n,B)=\Sigma_q(n,2)$.

In this paper we state a simple lemma (Lemma \ref{main}), that generalizes the so-called \textit{permutation method} and explore its consequences. It can be applied to other structures, and in particular for the subspaces it implies the following theorem.

\begin{thm}\label{kov}
Let $T$ be a hereditary property. If any family in $2^{[n]}$ satisfying $T$ has at most $\Sigma(n,k)$ members, then any family of subspaces of $\mathbb{F}_q^n$ with property $T$ has at most $\Sigma_q(n,k)$ members.
\end{thm}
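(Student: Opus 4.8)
The plan is to derive Theorem~\ref{kov} from the covering lemma (Lemma~\ref{main}) by covering the subspace lattice of $\mathbb{F}_q^n$ by copies of the Boolean lattice $2^{[n]}$, and then solving a small linear program. For an ordered basis $B=(e_1,\dots,e_n)$ of $\mathbb{F}_q^n$ and a set $S\subseteq[n]$, write $W_S^B=\langle e_j:j\in S\rangle$ and $\cL_B=\{W_S^B:S\subseteq[n]\}$. The bijection $S\mapsto W_S^B$ is an isomorphism from $2^{[n]}$ onto $\cL_B$ preserving everything a hereditary property can refer to: $\dim W_S^B=|S|$, one has $W_S^B\subseteq W_{S'}^B$ iff $S\subseteq S'$, and $W_S^B\cap W_{S'}^B=W_{S\cap S'}^B$ (so two coordinate subspaces meet nontrivially iff the index sets meet). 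Hence for any family $\cG$ of subspaces with property $T$ the restriction $\cG\cap\cL_B$ corresponds to a family in $2^{[n]}$ with property $T$, and the hypothesis yields $|\cG\cap\cL_B|\le\Sigma(n,k)$ for every $B$.

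Next I would count how often each level is covered. Fix an $i$-dimensional subspace $U$; given an ordered basis $B$ with $U\in\cL_B$, the size-$i$ set $S$ with $W_S^B=U$ is uniquely determined (no $e_j$ lies in the span of the remaining basis vectors), so the number of such $B$, obtained by choosing $S$, then an ordered basis of $U$ for the coordinates in $S$, then an ordered extension to a basis of $\mathbb{F}_q^n$ for the remaining coordinates, equals
\[
\textstyle\binom{n}{i}\prod_{t=0}^{i-1}(q^i-q^t)\prod_{t=i}^{n-1}(q^n-q^t).
\]
This depends only on $i$, and dividing by the number $N=\prod_{t=0}^{n-1}(q^n-q^t)$ of ordered bases of $\mathbb{F}_q^n$ gives $\binom{n}{i}\big/{n\brack i}_q$. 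Summing $|\cG\cap\cL_B|\le\Sigma(n,k)$ over all $B$ and counting the incidences $\{(U,B):U\in\cG\cap\cL_B\}$ by $U$, we obtain, with $g_i$ the number of $i$-dimensional members of $\cG$ and $c_i=\binom{n}{i}\big/{n\brack i}_q$,
\[
\textstyle\sum_{i=0}^n c_i\,g_i\ \le\ \Sigma(n,k).
\]
This inequality is the instance of Lemma~\ref{main} relevant here; note that, unlike in the classical permutation method with full chains, the covering multiplicity $c_i$ genuinely varies with the level, which is exactly why the general lemma is needed.

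It remains to show $\sum_i g_i\le\Sigma_q(n,k)$ whenever $0\le g_i\le{n\brack i}_q$ and $\sum_i c_ig_i\le\Sigma(n,k)$, i.e.\ to solve this fractional knapsack. The only input needed is that $(c_i)_{i=0}^n$ is symmetric ($c_i=c_{n-i}$) and valley-shaped: since
\[
\frac{c_{i+1}}{c_i}=\frac{(n-i)(q^{i+1}-1)}{(i+1)(q^{n-i}-1)},
\]
one has $c_{i+1}\le c_i$ exactly when $i+1\le n-i$, because this amounts to $\tfrac{q^{i+1}-1}{i+1}\le\tfrac{q^{n-i}-1}{n-i}$ and $a\mapsto\tfrac{q^a-1}{a}$ is increasing. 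Thus the $k$ smallest values of $c_i$ are attained on the middle $k$ levels $M=\{\lfloor\tfrac{n-k}{2}\rfloor+1,\dots,\lfloor\tfrac{n-k}{2}\rfloor+k\}$, i.e.\ on exactly the indices for which $\Sigma(n,k)=\sum_{i\in M}\binom{n}{i}$ and $\Sigma_q(n,k)=\sum_{i\in M}{n\brack i}_q$. Now pick $i^\ast\in M$ with $c_{i^\ast}$ maximal and set $\mu=1/c_{i^\ast}>0$, so that $\mu c_i\le1$ for $i\in M$ and $\mu c_i\ge1$ for $i\notin M$. Then, using $0\le g_i\le{n\brack i}_q$ and $c_i{n\brack i}_q=\binom{n}{i}$,
\[
\textstyle\sum_i g_i\ \le\ \mu\sum_i c_ig_i+\sum_{i\in M}(1-\mu c_i)g_i\ \le\ \mu\,\Sigma(n,k)+\sum_{i\in M}(1-\mu c_i){n\brack i}_q\ =\ \Sigma_q(n,k),
\]
which is $|\cG|\le\Sigma_q(n,k)$.

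The genuine content — and the part a write-up must handle carefully — is the first half: choosing the sublattices $\cL_B$, recognizing that they are level-uniform only in the weighted sense (multiplicities $c_i$), and confirming that a hereditary property really does transfer faithfully across the isomorphism $2^{[n]}\cong\cL_B$ (true for all properties considered in this paper, since they are built from inclusion, intersection and cardinality, all of which the isomorphism respects). Once the weighted profile inequality $\sum_i c_ig_i\le\Sigma(n,k)$ is available, the monotonicity of $a\mapsto(q^a-1)/a$ and the linear-programming finish are routine.
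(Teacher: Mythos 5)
Your proof is correct and follows essentially the same route as the paper: cover the subspace lattice by the Boolean sublattices $\cL_B$ coming from bases, apply the covering lemma with weights proportional to the covering multiplicities to get the weighted profile inequality, and then use the valley-shaped symmetry of those multiplicities to conclude that the middle $k$ levels are optimal. The only difference is presentational: you make the final fractional-knapsack step explicit via a dual multiplier and prove the monotonicity of $c_i$ from the monotonicity of $(q^a-1)/a$, whereas the paper asserts the monotonicity of the $t_i$ and verifies the equality for the middle levels by direct computation.
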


This means that the result of  De Bonis, Katona and Swanepoel \cite{DKS} about butterflies implies the result of Shahriari and Yu \cite{sy}. Note that they also determine the extremal families. They also state a conjecture, that would follow from a result in \cite{GMNPV}, using Theorem \ref{kov}. 

The asymptotic version of Theorem \ref{kov} is also true, giving the following result.

\begin{thm}\label{kov2}
Let $T$ be a hereditary property. If any family in $2^{[n]}$ satisfying $T$ has at most $(1+o(1))\Sigma(n,k)$ members, then any family of subspaces of $\mathbb{F}_q^n$ with property $T$ has at most $(1+o(1))\Sigma_q(n,k)$ members.

\end{thm}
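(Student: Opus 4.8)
The plan is to deduce Theorem~\ref{kov2} from the same ``covering'' mechanism that proves Theorem~\ref{kov}, only carried out more carefully so that the $(1+o(1))$ error in the hypothesis is transported to a $(1+o(1))$ error in the conclusion rather than being swallowed or blown up. The key point is that Lemma~\ref{main} (the covering lemma) is not used to compare a single subspace family against a single set family, but to average a subspace family over a large collection of ``coordinate-type'' substructures, each of which is isomorphic (as a poset) to $2^{[n]}$; the hypothesis on $T$ in $2^{[n]}$ then bounds the density of the family inside each substructure, and the bound on the full subspace family follows by a double-counting / averaging argument. So what I would actually write is: run the proof of Theorem~\ref{kov} verbatim, but replace each appeal to the hypothesis ``a $T$-family in $2^{[n]}$ has at most $\Sigma(n,k)$ members'' by the weaker ``\dots at most $(1+\varepsilon_n)\Sigma(n,k)$ members,'' track the constant, and check that the resulting bound on the subspace side is $(1+\varepsilon_n')\Sigma_q(n,k)$ with $\varepsilon_n' \to 0$.

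Concretely, I would proceed as follows. First, fix a property $T$ as in the statement and set $\varepsilon_n$ so that every $T$-family in $2^{[n]}$ has size at most $(1+\varepsilon_n)\Sigma(n,k)$ with $\varepsilon_n\to 0$; this exists by hypothesis. Second, recall from the proof of Theorem~\ref{kov} the collection $\mathcal C$ of substructures of the subspace lattice of $\mathbb F_q^n$ used there — these are obtained from maximal flags / chains of subspaces together with a choice of complements, and each carries a poset isomorphism onto $2^{[n]}$ that preserves the level structure. The crucial combinatorial facts inherited from that proof are: (i) each such substructure is order-isomorphic to $2^{[n]}$, so the hypothesis applies to the trace of any $T$-family on it, giving a trace of size at most $(1+\varepsilon_n)\Sigma(n,k)$; and (ii) there is a ``regularity'' statement about $\mathcal C$ — every $i$-dimensional subspace lies in the same number of members of $\mathcal C$, and each member of $\mathcal C$ contains exactly $\binom{n}{i}$ subspaces of dimension $i$. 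Third, given a $T$-free... sorry, a $T$-satisfying family $\mathcal F$ of subspaces, since $T$ is hereditary its trace on every $S\in\mathcal C$ again satisfies $T$, hence has $\le (1+\varepsilon_n)\Sigma(n,k)$ members; summing over all $S\in\mathcal C$ and using the regularity of $\mathcal C$ together with the identity $\Sigma_q(n,k)/\Sigma(n,k)$ being, up to the same averaging weights, the ``$q$-weight'' of the levels, one gets $|\mathcal F|\le (1+\varepsilon_n)\Sigma_q(n,k)$. Since $\varepsilon_n\to 0$, this is exactly $(1+o(1))\Sigma_q(n,k)$.

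The one genuine subtlety — and the step I would flag as the main obstacle — is making sure the $o(1)$ is \emph{uniform} in the right way and does not get multiplied by something growing in $n$. In the exact version (Theorem~\ref{kov}) one never has to worry about this because the constant is $1$; here we must check that the averaging step is an honest convex combination, i.e. that $\Sigma_q(n,k)$ is recovered as $\sum_i c_i {n\brack i}_q$ for exactly the same nonnegative coefficients $c_i$ (depending only on $\mathcal C$ and $n$, not on $q$) for which $\Sigma(n,k)=\sum_i c_i\binom{n}{i}$, so that multiplying the level-wise inequalities by $c_i$ and summing preserves the factor $(1+\varepsilon_n)$ without amplification. This is essentially the content of Lemma~\ref{main} applied with the weight function that is constant on the middle $k$ levels and $0$ elsewhere, and the point is that Lemma~\ref{main} already gives the bound with the \emph{same} multiplicative slack on both sides; so really the proof is: \emph{apply Lemma~\ref{main} (equivalently, the covering argument behind Theorem~\ref{kov}) but feed it the hypothesis with its $(1+o(1))$ factor intact, and observe the lemma propagates that factor unchanged.} A remark that the same argument yields the weighted analogue (with profile vectors and the corresponding $q$-weights) would be natural to include.
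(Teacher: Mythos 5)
Your proposal is correct and is essentially the paper's proof: run the covering argument of Theorem~\ref{kov} with the weights $w_i=t_i$ (so that $\underline{w/t}$ is the all-ones vector and the hypothesis bounds each trace $|\cG\cap\cF|$ by $(1+o(1))\Sigma(n,k)$), obtain $\underline{w}(\cF)\le(1+o(1))|\Gamma|\Sigma(n,k)$, and convert this weighted bound to a cardinality bound using that the $t_i$ are minimized on the middle $k$ levels, which achieve equality --- so the surplus weight buys only $o(\Sigma_q(n,k))$ extra subspaces. The one slip is your closing suggestion to apply Lemma~\ref{main} with the weight that is constant on the middle $k$ levels and $0$ elsewhere: with that choice $\underline{w/t}(G')$ is not controlled by the cardinality hypothesis, so the correct choice is $w_i=t_i$, after which, as you correctly anticipate, the weight-to-cardinality conversion costs only a bounded factor in the error term.
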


\begin{corollary} If Conjecture \ref{conj} holds for $P$ and $e_q(P)=e(P)$, then Conjecture \ref{conj2} also holds for $P$.

\end{corollary}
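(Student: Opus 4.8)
The plan is to obtain the corollary as an immediate consequence of Theorem~\ref{kov2}, applied to the hereditary property of being $P$-free. First I would note that ``$\cF$ is $P$-free'' really is a hereditary property in the sense of the theorems above: if $\cF$ contains no weak copy of $P$, then the same is true of every subfamily of $\cF$, and the definition of ``weakly contains $P$'' refers only to the containment order, so it makes sense verbatim both for families in $2^{[n]}$ and for families of subspaces of $\mathbb{F}_q^n$. Call this property $T$.

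Next I would translate the hypothesis. Assuming Conjecture~\ref{conj} for $P$ gives $La(n,P)=(1+o(1))\Sigma(n,e(P))$; in particular its upper-bound half says that every $P$-free family in $2^{[n]}$ has at most $(1+o(1))\Sigma(n,e(P))$ members. Taking $k:=e(P)$ (a fixed nonnegative integer, since $e(P)$ is finite for every nonempty $P$), this is exactly the hypothesis of Theorem~\ref{kov2} for the property $T$. Its conclusion then yields that every $P$-free family of subspaces of $\mathbb{F}_q^n$ has at most $(1+o(1))\Sigma_q(n,k)$ members, i.e.\ $La_q(n,P)\le (1+o(1))\Sigma_q(n,e(P))$. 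Using the assumption $e_q(P)=e(P)$, this reads $La_q(n,P)\le (1+o(1))\Sigma_q(n,e_q(P))$.

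For the matching lower bound I would simply invoke the definition of $e_q(P)$: by construction, for every $n$ the union of the middle $e_q(P)$ levels of the lattice of subspaces of $\mathbb{F}_q^n$ is $P$-free, and it has precisely $\Sigma_q(n,e_q(P))$ members, so $La_q(n,P)\ge \Sigma_q(n,e_q(P))$. Combining the two estimates gives $La_q(n,P)=(1+o(1))\Sigma_q(n,e_q(P))$, which is the statement of Conjecture~\ref{conj2} for $P$.

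I do not anticipate any genuine obstacle: all of the work is contained in Theorem~\ref{kov2}, and the corollary is just the observation that ``$P$-free'' is a hereditary property, together with the bookkeeping of the $o(1)$ terms. The only points requiring a moment's care are that the error term in Theorem~\ref{kov2} is uniform in $n$ with $q$ fixed, that the lower bound above is exact (so the two one-sided bounds combine into a genuine asymptotic equality), and that $e(P)$ is finite so that $k=e(P)$ is a legitimate constant to feed into the theorem.
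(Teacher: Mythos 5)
Your proposal is correct and is exactly the argument the paper intends: the corollary is stated as an immediate consequence of Theorem~\ref{kov2} applied to the hereditary property of being $P$-free, with the upper bound coming from that theorem (via $e_q(P)=e(P)$) and the lower bound from the definition of $e_q(P)$. Your write-up just makes explicit the bookkeeping the paper leaves implicit.
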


To state the covering lemma (Lemma \ref{main}), we need some preparation, hence we postpone it to Section 2. We also describe how it relates to several known proofs. In Section 3 we prove Theorems \ref{kov} and \ref{kov2}. In Section 4 we examine its relation to profile polytopes and related topics, and initiate the study of generalized forbidden subposet problems in the poset of subspaces.

\section{The main lemma}

Our lemma is motivated by a lemma by Holroyd and Talbot \cite{ht}.  
We say that a family of subsets of $S$ is \textit{$t$-covering} if every element of $S$ is contained in exactly $t$ sets of the family. Given a partition of $S$ into $S_0\cup S_1\cup \dots \cup S_n$ and a vector $\underline{t}=(t_0,t_1,\dots, t_n)$, we say that a family of subsets of $S$ is a \textit{$\underline{t}$-covering} of $S$ if for each $0\le i\le n$, every element of $S_i$ is contained in exactly $t_i$ sets of the family. 

In our applications, $S$ will be $2^{[n]}$ or the family of subspaces of $\mathbb{F}_q^n$, and $S_i$ will be level $i$.
Holroyd and Talbot \cite{ht} considered coverings of subfamilies $\cF$ of one level $\binom{[n]}{i}$, and their lemma stated that if an element $x$ has the property that the largest intersecting family in every $\cG\in\Gamma$ is $\{G\in \cG: x\in G\}$, then the largest intersecting family in $\cF$ is $\{F\in \cF: x\in F\}$. Our main contribution is the simple observation that we can extend their method to other forbidden configurations and more levels.

For a weight vector $\underline{w}=(w_0,\dots,w_n)$ and a set $F\subset S$, let $\underline{w}(F)=\sum_{i=0}^n w_i|F\cap S_i|$. Let $\underline{w/t}=(w_0/t_0,\dots,w_n/t_n)$. We will always assume that every coordinate of every weight vector is non-negative. 
A version of the lemma below has already appeared in my master's thesis \cite{gerb}.

\begin{lemma}[Covering lemma]\label{main}
Let $T$ be a hereditary property of subsets of $S$ and $\Gamma$ be a $\underline{t}$-covering family of $S$. Assume that for every $G\in\Gamma$, every subset $G'$ of $G$ with property $T$ has $\underline{w/t}(G')\le x$. Then $\underline{w}(F)\le |\Gamma|x$ for every $F\subset S$ with property $T$.
\end{lemma}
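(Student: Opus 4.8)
The plan is a short double-counting argument. Fix $F\subset S$ with property $T$; I want to bound $\underline{w}(F)$. The key move is to look at the traces $F\cap G$ for $G\in\Gamma$: since $F$ has property $T$ and $T$ is hereditary, $F\cap G\subseteq F$ also has property $T$, and since $F\cap G\subseteq G$, the hypothesis applies and gives $\underline{w/t}(F\cap G)\le x$. Summing this over all $G\in\Gamma$ yields
\[
\sum_{G\in\Gamma}\underline{w/t}(F\cap G)\ \le\ |\Gamma|\,x .
\]

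Next I would rewrite the left-hand side as $\underline{w}(F)$. Expanding the definition of $\underline{w/t}$ and swapping the order of summation,
\[
\sum_{G\in\Gamma}\underline{w/t}(F\cap G)=\sum_{i=0}^{n}\frac{w_i}{t_i}\sum_{G\in\Gamma}|F\cap G\cap S_i|,
\]
and for each $i$ the inner sum equals $\sum_{a\in F\cap S_i}|\{G\in\Gamma:a\in G\}|=t_i\,|F\cap S_i|$, precisely because $\Gamma$ is a $\underline{t}$-covering of $S$. The factors $t_i$ cancel, leaving $\sum_{i=0}^n w_i|F\cap S_i|=\underline{w}(F)$, which combined with the displayed inequality is exactly the claim.

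There is no real obstacle here; the content of the lemma is the bookkeeping, and the two ingredients that make it go through are heredity of $T$ (so that each trace $F\cap G$ is an admissible subset of $G$) and the exact cardinalities supplied by the covering condition. The one thing to get right is the normalization: the covering contributes a multiplicity $t_i$ on level $i$, so the hypothesis must be phrased with weight $w_i/t_i$ on that level for the cancellation to be exact — this is why $\underline{w/t}$, rather than $\underline{w}$, appears in the assumption. Minor care is also needed so that $w_i/t_i$ is well-defined, i.e.\ $t_i>0$ on every nonempty level $S_i$; on an empty level the corresponding term vanishes and the value assigned to $w_i/t_i$ is immaterial.
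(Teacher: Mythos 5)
Your proof is correct and is essentially the paper's own argument: the same double count of $\sum_{G\in\Gamma}|F\cap G\cap S_i|=t_i|F\cap S_i|$, followed by the exchange of summation and the application of the hypothesis to each trace $F\cap G$ (with heredity justifying that the trace has property $T$). The paper just runs the chain of equalities in the opposite direction; your remarks on normalization and $t_i>0$ are sensible but add nothing beyond what the paper's conventions already assume.
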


\begin{proof}

Let $F$ be a set with property $T$.

Observe that we have $t_i|F\cap S_i|=\sum_{G\in\Gamma} |G\cap F\cap S_i|$, as every element of $F\cap S_i$ is counted $t_i$ times on both sides. Thus we have

\begin{equation*}
    \begin{split}
        \underline{w}(F)=&\sum_{i=0}^n w_i|F\cap S_i|=\sum_{i=0}^n \frac{w_i}{t_i}t_i|F\cap S_i|=\sum_{i=0}^n \frac{w_i}{t_i}\sum_{G\in\Gamma} |G\cap F\cap S_i|\\
        =&\sum_{G\in\Gamma}\sum_{i=0}^n \frac{w_i}{t_i}|G\cap F\cap S_i|=\sum_{G\in\Gamma}\underline{w/t}(G\cap F)\le \sum_{\cG\in\Gamma} x=|\Gamma|x.
    \end{split}
\end{equation*}
\end{proof}

Let us describe how one can use this lemma in extremal finite set theory. Let $S=2^{[n]}$ and $S_i=\binom{[n]}{i}$. Then subsets of $S$ are families, we will denote them by $\cF$ and $\cG$ instead of $F$ and $G$.

The prime examples of covering families where the above lemma is useful are given by the \textit{permutation method}. Given a permutation $\alpha:[n]\rightarrow [n]$, and a set $F\subset [n]$, let $\alpha(F)=\{\alpha(i): i\in F\}$. Similarly, for a family $\cF\subset 2^{[n]}$, let $\alpha(\cF)=\{\alpha(F):F\in \cF\}$. 

Let $\cG_0$ be a family that has at least one $i$-element set for every $0\le i\le n$, and let $\Gamma$ consist of $\alpha(\cG_0)$ for all permutations $\alpha$. Let $g_i=|\cG_0\cap \binom{[n]}{i}|>0$ and $t_i=g_ii!(n-i)!$, then $\Gamma$ is a $\underline{t}$-covering of $2^{[n]}$. 

The simplest example is when $\cG_0$ is a full chain. Consider a Sperner family  $\cF\subset 2^{[n]}$ and let $\underline{w}=\underline{t}$. Then $\sum_{F\in \cF}|F|!(n-|F|)!=\underline{w}(\cF)=\sum_{\cG\in\Gamma}\sum_{H\in \cG\cap \cF}\underline{w/t}(H)\le \sum_{\cG\in\Gamma} 1=|\Gamma|=n!$. Dividing by $n!$ we obtain the already mentioned LYM-inequality. Another example is when $\cG_0$ is the family of intervals in a cyclic ordering of $[n]$, resulting in the cycle method \cite{kat}.

Any family $\cG_0$ can be used to give upper bounds on problems in extremal finite set theory, but these bounds are unlikely to be sharp. For that, $\cG_0$ has to be very symmetric in a sense. We need that for \emph{every} permutation $\alpha$, the largest subfamily of $\alpha(\cG_0)$ with property $T$ has the same size.
Other examples for families $\cG_0$ that sometimes give sharp bounds are the chain-pairs \cite{gerbner} and double chains \cite{BN}. 

\medskip

Let us return to Lemma \ref{main} and examine a very special case. Assume $S_{i_1}\cup \dots \cup S_{i_k}$ has property $T$ and for every $G\in\Gamma$, $\underline{w/t}(G')= x$ for $G'=G\cap (S_{i_1}\cup \dots \cup S_{i_k})$ (In case of the permutation method, it means that the union of $k$ full levels have property $T$, and the weight inside $\alpha(\cG_0)$ is maximized by those $k$ levels). This implies that we have equality in Lemma \ref{main}. 

Now assume that we conjecture that $\underline{w}(\cF)$ is maximized by a family that is the union of $k$ full levels (among families with property $T$). Let $\cH_0$ be the intersection of those $k$ levels with $\cG_0$, then $\cH_0$ has property $T$. If $\cH_0$ happens to have the largest weight $\underline{w/t}$ among subfamilies of $\cG_0$ with property $T$, then it proves the conjecture (here we use the simple observation that $\alpha(\cH_0)$ would maximize $\underline{w/t}$ among subfamilies of $\alpha(\cG_0)$). Thus our goal would be to find $\cG_0$ with this property. 

For example, in case of antichains, it is a natural idea to consider a full chain as $\cG_0$. Indeed, for every weight, the maximum will be given by a family that consists of one member, which is a full level on the chain. Moreover, it is one of the levels with the largest weight, thus we can choose the same level all the time. This implies that for every weight function, the maximum in the Boolean poset is also given by a full level, giving us not only Sperner's theorem and the LYM inequality, but all the extreme points of the profile polytope, reproving a result in \cite{EFK}. Moreover, we say that a family is \textit{$k$-Sperner} if it is $P_{k+1}$-free. The above argument works for $k$-Sperner families as well, since on any chain, for any weight, the maximum is given by $k$ full levels. This, again, gives the extreme points of the profile polytope as well, reproving a result in \cite{EFK2}.


Observe that we do not need to have full levels in our conjecture to obtain an exact result without further computations. Assume that in our conjecture, for every $i$, the extremal family $\cH$ contains $\gamma_i\binom{n}{i}$ sets from level $i$, and $\cH$ contains a $\gamma_i$ fraction of the intersection of $\alpha(\cG_0)$ and level $i$. Then the same argument works. For example consider intersecting families on level $k$, and use the cycle method \cite{kat}. We choose a cyclic ordering of the elements of $[n]$ and let $\cG_0$ be the family of $k$-sets of consecutive elements. There are $n$ such $k$-sets, and $k$ of them contains a fixed element $x$. Let $\cH$ be the family of $k$-sets containing $x$, and $\cH_0$ be its intersection with $\cG_0$. It is not hard to see that $\cH_0$ is the largest intersecting subfamily of $\cG_0$ (provided $k\le n/2$). Thus, for every $\alpha$ we have that $\cH$ contains a $k/n$ fraction of the members of $\alpha(\cG_0)$. As $\cH$ contains a $k/n$ fraction of all the sets, we are done.

To finish this section, let us remark that we are mostly interested in the case every $w_i=1$. For that $w_i/t_i=1/(g_i i!(n-i)!)=\binom{n}{i}/n!g_i$. In case $\cG_0$ is a full chain, every $g_i$ is the same, in case $\cG_0$ is the cycle, almost every $g_i$ is the same (with the exception of $g_0$ and $g_n$). As multiplying with the same number does not change the extremal families, we can consider maximizing the weight function with $w'_i=\binom{n}{i}$ instead (assuming we can deal with the empty set and the full set some other way). If, on the other hand we can deal with the case of constant weight on the chain or the cycle for a property $T$, and the optimal family consists of the middle levels, then we obtain a LYM-type inequality for subfamilies of $2^{[n]}$ with property $T$, see for example the case of butterfly-free families in \cite{DKS}.










\section{Subspaces}

Let us turn our attention to $q$-analogues. Similarly to the Boolean case and the permutation method, it will again simplify our tasks if all $\cG\in \Gamma$ are isomorphic. Moreover, we would prefer to use $\cG$ where proving extremal results is either easy or has already been done. Therefore, we will use $\cG=2^{[n]}$. Indeed, it is isomorphic to a subfamily of subspaces. Choose an arbitrary basis $v_1,\dots,v_n$ of $\mathbb{F}_q^n$, and let $\cG$ be the family subspaces generated by a set of these vectors. Obviously the function that maps $H\subset [n]$ to the subspace $\langle v_x:x\in H\rangle$ keeps inclusion and intersection properties.

There are  $f(q,n)=(q^n-1)(q^n-q)(q^n-q^2)\cdots (q^n-q^{n-1})/n!$ ways to choose a basis, as we pick the vectors one by one, and we obtain a basis $n!$ ways. Hence $f(q,n)$ is the cardinality of $\Gamma$, which is a $\underline{t}$-covering of the subspaces of $\mathbb{F}_q^n$ with $t_i=\frac{(q^{i}-1)\cdots (q^{i}-q^{i-1})(q^n-q^{i})\cdots (q^n-q^{n-1})}{i!(n-i)!}$. Indeed, to count how many times an $i$-dimensional subspace is covered, we have to pick a basis of the $i$-dimensional subspace first, and then extend it to a basis of $\mathbb{F}_q^n$. We counted every $\cG\in\Gamma$ exactly $i!(n-i)!$ times, as we picked the basis in an ordered way. Observe that we have $t_0>t_1>\dots>t_{\lfloor n/2\rfloor}=t_{\lceil n/2\rceil}<t_{\lceil n/2\rceil+1}<\dots t_n$. 

Now we are ready to prove Theorem \ref{kov}, which states that if every family $\cF\subset 2^{[n]}$ satisfying a hereditary property $T$ has cardinality at most $\Sigma(n,k)$, then families of subspaces of $\mathbb{F}_q^n$ with property $T$ have cardinality at most $\Sigma_q(n,k)$. We note that the actual calculation could be omitted by the arguments presented in Section 2. We include it here for sake of completeness.

\begin{proof}[Proof of  Theorem \ref{kov}] Let $\cF$ be a family of subspaces satisfying $T$. Consider the $\underline{t}$-covering family $\Gamma$ defined above and let $w_i=t_i$. Then every $\cG\in \Gamma$ is isomorphic to $2^{[n]}$, thus by our assumption, the largest weight $\underline{w/t}$, i.e. the largest cardinality of a subfamily $\cG'\subset \cG$ satisfying $T$ is $\Sigma(n,k)$. This implies $\underline{w}(\cF)\le |\Gamma|\Sigma(n,k)$. To maximize $|\cF|$ among those families satisfying the above inequality, we need to pick subspaces with the smallest weight, i.e. from the middle levels. We claim that we can pick exactly the $k$ full middle levels, i.e. $\underline{w}(\cF_0)= |\Gamma|\Sigma(n,k)$ for the family $\cF_0$ consisting of $k$ middle levels. (Note that if $n+k$ is even, we have two options for $\cF_0$). This will finish the proof, because more than $\Sigma_q(n,k)$ subspaces would have larger weight than $|\Gamma|\Sigma(n,k)$.

We have
\begingroup\makeatletter\def\f@size{9.9}\check@mathfonts
\begin{equation*}
    \begin{split}
        \underline{w}(\cF_0)=&\sum_{i=\lfloor\frac{n-k}{2}\rfloor+1}^{\lfloor\frac{n-k}{2}\rfloor+k} w_i{n \brack i}_q=\sum_{i=\lfloor\frac{n-k}{2}\rfloor+1}^{\lfloor\frac{n-k}{2}\rfloor+k}\frac{(q^{i}-1)\cdots (q^{i}-q^{i-1})(q^n-q^{i})\cdots (q^n-q^{n-1})}{i!(n-i)!}{n\brack i}_q\\
        =&\sum_{i=\lfloor\frac{n-k}{2}\rfloor+1}^{\lfloor\frac{n-k}{2}\rfloor+k}\frac{(q^{i}-1)\cdots (q^{i}-q^{i-1})(q^n-q^{i})\cdots (q^n-q^{n-1})}{i!(n-i)!}\frac{(q^n-1)\dots (q^n-q^{n-1})}{(q^i-1)\dots (q^{i}-q^{i-1})(q^{n-i}-1)\dots (q^{n-i}-q^{n-i-1})}\\
        =&\sum_{i=\lfloor\frac{n-k}{2}\rfloor+1}^{\lfloor\frac{n-k}{2}\rfloor+k} \frac{f(q,n) n!}{i!(n-i)!}=\sum_{i=\lfloor\frac{n-k}{2}\rfloor+1}^{\lfloor\frac{n-k}{2}\rfloor+k} |\Gamma|\binom{n}{i}=|\Gamma|\Sigma(n,k).
    \end{split}
\end{equation*}\endgroup
\end{proof}

Note that there are several similar statements we could prove. We chose to state this one because it immediately gives the exact value of $La_q(n,B)$. Observe that the Boolean result actually gives a weighted result in the case of subspaces, that is stronger than Theorem \ref{kov}. In case of the butterfly poset, we could even use the LYM-type inequality in the Boolean poset, to obtain that for a butterfly-free family $\cF$ of subspaces, we have $\sum_{F\in\cF}1/{n\brack dim(F)}\le 2$. Let us prove now Theorem \ref{kov2}, which is the asymptotic version of Theorem \ref{kov}.

\begin{proof}[Proof of Theorem \ref{kov2}.] Let $P$ be an arbitrary poset, assume Conjecture \ref{conj} holds, and let $\cF$ be a $P$-free family of subspaces of $\mathbb{F}_q^n$. We follow the proof of Theorem \ref{kov}. Using its notation, we obtain $\underline{w}(\cF)\le (1+o(1))|\Gamma|\Sigma(n,k)$. Again, to maximize $|\cF|$ among those families satisfying the above inequality, we need to pick subspaces with the smallest weight, i.e. from the middle levels. This time we claim that we can pick the subspaces in $\cF_0$, and $o(|\cF_0|$ additional subspaces. This will finish the proof similarly to the proof of Theorem \ref{kov}.

We have proved $\underline{w}(\cF_0)= |\Gamma|\Sigma(n,k)$, thus the remaining subspaces have total weight $o(|\Gamma|\Sigma(n,k))=o(\underline{w}(\cF_0))$. As each of those have weight not smaller than any weight in $\cF_0$, more than $\varepsilon|\cF_0|$ of them would have weight more than $\varepsilon \underline{w}(\cF_0)$, a contradiction that finishes the proof.
\end{proof}



\section{Profile polytopes, chain profile polytopes, generalized forbidden subposet problems}

In the previous sections we considered arbitrary weights. This means our method can potentially determine the extreme points of the profile polytope for a hereditary property $T$. If every extreme point in the Boolean case is the union of full levels, and the corresponding union of full levels has property $T$ in the case of subspaces, then this is the situation. Unfortunately, we are only aware of one particular property where this is the situation. For $k$-Sperner families, the Boolean result was proved in \cite{EFK2}. We note that instead of using the substructure isomorphic to $2^{[n]}$ with Lemma \ref{main}, one could use a simpler substructure: a full chain with Lemma \ref{main}, to obtain the same result, i.e. to determine the extreme points. Moreover, it also easily follows from the LYM-inequality, which is known to hold for the poset of subspaces. In fact, one can analogously define the profile vectors and polytopes for any graded poset and show for a large class of posets that the extreme points of $k$-Sperner families are the profiles of the unions of at most $k$ full levels.

Gerbner and Patk\'os \cite{GP} introduced $l$-chain profile vectors. Given a family $\cF$, its $l$-chain profile vector is an element of the $\binom{n+1}{l}$-dimensional Euclidean space. A coordinate corresponds to a set $\{i_1,\dots,i_l\}$ with $i_1<i_2<\dots<i_l$. The value of that coordinate is the number of chains of length $l$ with one element from level $i_j$ for every $1\le j\le l$. They determined the extreme points of the $l$-chain profile polytopes of intersecting families and of $k$-Sperner families.

They mentioned in \cite{GP2}, after determining the extreme points of the profile polytope of intersecting families of subspaces, that with the same method, one can determine the extreme points of the $l$-chain profile polytope as well. Here we show that similarly, the extreme points of the $l$-chain profile polytope of $k$-Sperner families of subspaces can be determined. We will state a modified version  of Lemma \ref{main} that counts copies of a poset $Q$ instead of elements.

Let $Q$ be an arbitrary poset with elements $a_1,\dots, a_l$. Consider the $r=l^{n+1}$ functions that map every $a_j$ to an $S_i$.
Let us fix an ordering of these functions and let $f_i$ be the $i$th of them. For each $1\le i\le r$, let $\cS_i$ be an arbitrary family of $l$-sets with one element in $f_i(a_j)$ for every $1\le j\le l$. In the applications, where $S_i$ is a level, we will let $\cS_i$ consist of those $l$-sets, where the elements form a copy of $Q$. In particular, if for an embedding $f_i$ and for some $j,j'$ with $a_j<a_j'$ we have $f(a_j)$ is higher level than $f(a_{j'})$, then $\cS_i$ is empty. Let us consider only those $r'\le r$ functions $f_i$, where $\cS_i$ is not empty. We can assume without loss of generality that these functions are $f_1,\dots,f_{r'}$. 

Let $\underline{t}=(t_1,\dots,t_{r'})$ be a vector.
We say that a family $\Gamma$ of subsets of $S$ is \textit{$(l,\underline{t})$-covering} if for each $1\le i\le r'$, and each $l$-set in $\cS_i$, there are $t_i$ members of $\Gamma$ containing all the elements of that $l$-set (i.e. a particular copy of $Q$).
Let us consider a weight vector $\underline{w}=(w_1,\dots,w_{r'})$. For a set $F\subset S$, let $f_i$ denote the number of $l$-sets in $\cS_i$ with every element in $F$. Let $\underline{w}(F)=\sum_{i=1}^{r'} w_if_i$. Let $\underline{w/t}=(w_1/t_1,\dots,w_{r'}/t_{r'})$. We will assume that every weight is non-negative (as $T$ is hereditary, elements of $S$ with negative weight could simply be deleted anyway from any subset of $S$ with property $T$).

\begin{lemma}\label{mainl} Let $T$ be a hereditary property of subsets of $S$ and $\Gamma$ be an $(l,\underline{t})$-covering family of $S$. Assume that for every $G\in \Gamma$, every subset $G'$ of $G$ with property $T$ has $\underline{w/t}(G')\le x$. Then $\underline{w}(F)\le |\Gamma|x$ for every $F\subset S$ with property $T$.

\end{lemma}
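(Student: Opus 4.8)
The plan is to mimic, essentially verbatim, the double-counting argument used in the proof of Lemma~\ref{main}, replacing ``elements of $F$ counted with multiplicity $t_i$'' by ``copies of $Q$ inside $F$ (i.e.\ members of $\cS_i$ with all elements in $F$) counted with multiplicity $t_i$''. First I would fix an arbitrary $F\subset S$ with property~$T$, and observe the key counting identity: for each $1\le i\le r'$,
\begin{equation*}
t_i f_i = \sum_{G\in\Gamma} \#\{\text{$l$-sets in }\cS_i\text{ with every element in }G\cap F\},
\end{equation*}
because each such $l$-set (a copy of $Q$ living in $F$) lies inside exactly $t_i$ members of $\Gamma$ by the $(l,\underline{t})$-covering hypothesis, and conversely if all $l$ of its elements lie in $G$ and in $F$ then it is counted on the right.

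Next I would chain the weighted sum exactly as in Lemma~\ref{main}. Writing $f_i^{G}$ for the number of $l$-sets in $\cS_i$ with every element in $G\cap F$, we have
\begin{equation*}
\underline{w}(F)=\sum_{i=1}^{r'} w_i f_i=\sum_{i=1}^{r'}\frac{w_i}{t_i}\,t_i f_i=\sum_{i=1}^{r'}\frac{w_i}{t_i}\sum_{G\in\Gamma} f_i^{G}=\sum_{G\in\Gamma}\sum_{i=1}^{r'}\frac{w_i}{t_i} f_i^{G}=\sum_{G\in\Gamma}\underline{w/t}(G\cap F).
\end{equation*}
Now for each fixed $G\in\Gamma$, the set $G\cap F$ is a subset of $G$, and since $T$ is hereditary and $F$ has property~$T$, the set $G\cap F$ has property~$T$ as well; hence by hypothesis $\underline{w/t}(G\cap F)\le x$. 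Summing over the $|\Gamma|$ members of $\Gamma$ gives $\underline{w}(F)\le|\Gamma|x$, as claimed.

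The only genuine point of care — and the ``main obstacle'', though a mild one — is to make sure the bookkeeping over the index set $\{1,\dots,r'\}$ is consistent: the restriction to the $r'$ functions $f_i$ with $\cS_i\neq\emptyset$ is harmless because the dropped indices contribute $f_i=0$ to every $\underline{w}(F)$, and the definition of $\underline{w/t}(G\cap F)$ must be read as $\sum_{i=1}^{r'}(w_i/t_i) f_i^{G}$ with $f_i^{G}$ computed relative to $G\cap F$, which is exactly the ``$\underline{w}$ evaluated at the subset $G\cap F$'' used in the hypothesis. One should also note $t_i>0$ for all $i\le r'$ (implicit in the covering definition, since each copy of $Q$ in $\cS_i$ is covered, so the divisions $w_i/t_i$ make sense), and that non-negativity of the weights is not even needed for this inequality — it is only invoked, as the paragraph before the lemma explains, to justify deleting negative-weight elements when one wants to reduce to the non-negative case. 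Thus the proof is a direct transcription of the proof of Lemma~\ref{main} with ``point'' replaced by ``copy of $Q$''.
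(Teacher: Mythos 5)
Your proof is correct and is essentially identical to the paper's own argument: the same double-counting identity $t_i f_i=\sum_{G\in\Gamma}f_i^G$ (the paper writes $h_i$ for your $f_i^G$), followed by the same interchange of sums and the appeal to heredity of $T$ applied to $G\cap F$. No issues.
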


\begin{proof} Observe that we have $t_if_i=\sum_{G\in\Gamma}h_i$, where $h_i$ denotes the number of $l$-sets in $\cS_i$ with each element of it in $F\cap G$. Indeed, the $l$-sets in $\cS_i$ with each element in $F$ are counted $t_i$ times on both sides. Thus we have

\begin{equation*}
    \begin{split}
        \underline{w}(F)=&\sum_{i=1}^{r'} w_if_i=\sum_{i=1}^{r'} \frac{w_i}{t_i}t_if_i=\sum_{i=1}^{r'} \frac{w_i}{t_i}\sum_{G\in\Gamma}h_i=\sum_{G\in\Gamma}\sum_{i=1}^{r'} \frac{w_i}{t_i}h_i\\
        =&\sum_{G\in\Gamma}\underline{w/t}(G\cap F)\le \sum_{G\in\Gamma} x=|\Gamma|x.
    \end{split}
\end{equation*}
\end{proof}

We have equality here if for every $G\in\Gamma$, there is a $G'\subset G$ satsifying $T$ with $\underline{w/t}(G')=x$, and $G'=G\cap F$. This holds in the following situation. Let $T$ be the $k$-Sperner property, $S$ be the family of subspaces of $\mathbb{F}_q^n$ with the usual partition into levels, and $\cS_i$ be those $l$-sets that form a chain. Let $\Gamma$ consist of copies of the Boolean poset, as described in Section 3 (note that we could use instead the chains given by a basis and its ordering). Let us assume levels $j_1,\dots,j_k$ have the maximum weight $\underline{w/t}$ in the Boolean poset, and let $F$ consist of the subspaces on levels $j_1,\dots,j_k$. Then by the above, $F$ has the largest weight $\underline{w}(F)=|\Gamma|x$ among $k$-Sperner families. We obtained that for every non-negative weight the union of $k$ levels have the largest weight, which implies the following result.

\begin{corollary}\label{trivi} The extreme points of the $l$-chain profile polytope of $k$-Sperner families of subspaces of $\mathbb{F}_q^n$ are the unions of at most $k$ levels.

\end{corollary}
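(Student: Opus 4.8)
The plan is to apply Lemma~\ref{mainl} in exactly the configuration already set up in the paragraph preceding Corollary~\ref{trivi}, and then to verify that the resulting bound is attained by unions of levels so that the extreme points are characterized. First I would fix the ingredients: $S$ is the family of all subspaces of $\mathbb{F}_q^n$ partitioned into levels $S_0,\dots,S_n$; $T$ is the $k$-Sperner property (being $P_{k+1}$-free), which is hereditary; $Q$ is the chain $P_l$ on $l$ elements $a_1<\dots<a_l$; and for each surviving embedding $f_i$ the family $\cS_i$ consists of the $l$-chains of subspaces whose $j$-th member lies on level $f_i(a_j)$. The covering family $\Gamma$ is taken to be the set of copies of $2^{[n]}$ inside the subspace lattice obtained from ordered bases, exactly as in Section~3; equivalently one may use the full chains given by a basis and its orderings. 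Since each such $\cG\in\Gamma$ is isomorphic to $2^{[n]}$ (or to a full chain), the $l$-chain profile polytope of $k$-Sperner families \emph{inside} $\cG$ has, by the result of \cite{GP} for the Boolean case (respectively by the elementary chain computation), all its extreme points equal to unions of at most $k$ levels; hence for any non-negative weight $\underline{w}$ the quantity $\underline{w/t}(\cG')$ over $k$-Sperner $\cG'\subseteq\cG$ is maximized at $x=\underline{w/t}(\cG\cap F_0)$, where $F_0$ is the union of some $k$ levels $j_1,\dots,j_k$ realizing the maximum; and crucially one may choose the \emph{same} set of levels $\{j_1,\dots,j_k\}$ simultaneously for every $\cG\in\Gamma$, because all the $\cG$ are isomorphic via basis permutations and the weight depends only on the levels.

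Next I would run Lemma~\ref{mainl} with this data: it yields $\underline{w}(F)\le|\Gamma|x$ for every $k$-Sperner family $F$ of subspaces. Then I would check the matching lower bound by taking $F=F_0$, the union of levels $j_1,\dots,j_k$: because $F_0$ is $k$-Sperner and $F_0\cap\cG$ is precisely the union of those levels inside $\cG$ (which is the maximizer there), the equality clause of Lemma~\ref{mainl} applies and gives $\underline{w}(F_0)=|\Gamma|x$. Combining, $\underline{w}(F_0)=\max\{\underline{w}(F): F\ \text{is }k\text{-Sperner}\}$ for every non-negative weight vector $\underline{w}$. Since every linear functional on the $l$-chain profile polytope is maximized at an extreme point, and we have just shown it is always maximized at a union of at most $k$ levels, every extreme point must itself be (the $l$-chain profile vector of) a union of at most $k$ levels. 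Conversely, each union of at most $k$ levels is a $k$-Sperner family and, being the unique maximizer of a suitable strictly-positive weight concentrated on its levels, is an extreme point; together these give Corollary~\ref{trivi}.

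The main obstacle I expect is the simultaneity point: ensuring that a single choice of levels $j_1,\dots,j_k$ maximizes $\underline{w/t}$ inside \emph{every} $\cG\in\Gamma$ at once, rather than different copies calling for different level sets. This is handled by the observation already used in Section~2 and~3 that the copies $\cG$ are pairwise isomorphic through the basis-permutation action and that the weight $\underline{w/t}$ of a sub\-family of $\cG$ depends only on which levels its members occupy; hence the Boolean optimum is the same combinatorial object in each copy. A secondary, more bookkeeping-level point is to confirm that the $l$-chain weight counting in $\cG\cong 2^{[n]}$ matches the $l$-chain weight counting in $S$ restricted to $\cG$ (i.e.\ that a chain of subspaces inside $\cG$ corresponds to a chain of sets and vice versa, with the level labels preserved), which is immediate from the inclusion- and intersection-preserving identification of Section~3. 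No new quantitative estimates are needed beyond what Lemma~\ref{mainl} and the cited Boolean results already supply.
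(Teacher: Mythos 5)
Your proposal is correct and follows essentially the same route as the paper: apply Lemma~\ref{mainl} with $T$ the $k$-Sperner property, $\cS_i$ the $l$-chains, and $\Gamma$ the copies of $2^{[n]}$ (or full chains) from Section~3, invoke the Boolean $l$-chain result to identify a common optimal set of levels in every copy, and use the equality case to conclude that every non-negative weight is maximized by a union of at most $k$ levels. Your additional remarks on the simultaneity of the level choice and on the converse direction (that each such union is genuinely an extreme point) only make explicit what the paper leaves implicit.
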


We mentioned the $l$-chain polytopes here because the above result gives the first instance of a generalized forbidden subposet problem in the poset of subspaces. The generalized forbidden subposet problem seeks to find $La(n,P,Q)$, the largest number of copies of the poset $Q$ in a $P$-free subfamily of $2^{[n]}$. Its study was initiated by Gerbner, Keszegh and Patk\'os \cite{GKP}, analogously to the graph case \cite{AS} that has recently attracted a lot of attention. Further results on $La(n,P,P_l)$ can be found in \cite{GMNPV2}. 

On the one hand, Lemma \ref{mainl} shows that studying weighted versions of this problem on the cycle can potentially help obtain bounds. However, in case of counting the members of a family, we had the useful property that $w_i/t_i$ is the largest in the middle, exactly where the (conjectured) extremal families are. Therefore, an unweighted result on the cycle gave a weighted result in the Boolean case that implied the unweighted result. And similarly, an unweighted result for the Boolean case immediately implied the analogous bound for $La_q(n,P)$. However, this is not the case with the more complicated weight functions and more diverse extremal families that we deal with in generalized forbidden subposet problems.

On the other hand, we propose to study generalized forbidden subposet problems in the poset of subspaces, and let $La_q(n,P,Q)$ denote the largest number of copies of the poset $Q$ in a $P$-free family of subspaces of $\mathbb{F}_q^n$. Corollary \ref{trivi} implies that $La_q(n,P_k,P_l)$ is given by $k$ full levels (it is not hard to see that the best way to choose the $k$ levels $i_1,\dots,i_k$ is when the values $i_1,i_2-i_1,i_3-i_2,\dots,i_k-i_{k-1},n-i_k$ differ by at most one). For other pairs of posets, a weighted version in the Boolean case could give bounds on $La_q(n,P,Q)$.

To finish the paper, we obtain some simple results for $La_q(n,P,Q)$. They are unrelated to the earlier parts of the paper, but we would like to present some results concerning the topic we initiate the study of. Let the \textit{generalized diamond} poset $D_r$ have $r+2$ elements $a,b_1,\dots,b_r,c$ and relations $a<b_i<c$ for $1\le i\le r$.

\begin{prop}
\label{thm:easy}
\textbf{(i)} $La_q(n,\vee, {\wedge}_r)=La(n,{\wedge},\vee_r)={{n \brack \lfloor n/2\rfloor}_q \choose r}$.

\textbf{(ii)} $La_q(n,B,D_r)={{n \brack\lfloor n/2\rfloor}_q\choose r}$.

\textbf{(iii)} $La_q(n,P_r,\wedge_r)=\max_{0\le k\le n} {n\brack k}\binom{{k\brack \lfloor k/2\rfloor}_q}{r}$.

\end{prop}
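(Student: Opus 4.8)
For all three parts the plan has the same overall shape: identify the structure that the forbidden poset imposes, then count copies of the target poset inside that structure. Throughout write $N={n\brack\lfloor n/2\rfloor}_q$ for the size of a largest antichain of subspaces of $\mathbb F_q^n$.

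\emph{Part (i).} First I would pin down the shape of a $\vee$-free family $\cF$: if $A\subsetneq B$ with $A,B\in\cF$, then $A$ is minimal and $B$ maximal in $\cF$ (any $A'\subsetneq A$ or $B'\supsetneq B$ in $\cF$ would, together with $A$ and $B$, form a copy of $\vee$), and no member of $\cF$ has two distinct strict supersets in $\cF$. Hence the set $\cL$ of members of $\cF$ having a (necessarily unique) strict superset $\sigma(A)\in\cF$ is an antichain, so $|\cL|\le N$. A copy of $\wedge_r$ is exactly a top $B$ together with an $r$-subset of the fibre $\sigma^{-1}(B)\subseteq\cL$; these fibres are disjoint over distinct tops, so the number of copies is $\sum_B\binom{|\sigma^{-1}(B)|}{r}$ with $\sum_B|\sigma^{-1}(B)|=|\cL|\le N$, and the superadditivity $\binom ar+\binom br\le\binom{a+b}{r}$ (Vandermonde) gives $\sum_B\binom{|\sigma^{-1}(B)|}{r}\le\binom Nr$. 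Equality holds for the ``fan'': all $\lfloor n/2\rfloor$-dimensional subspaces together with $\mathbb F_q^n$. Finally $La_q(n,\wedge,\vee_r)=La_q(n,\vee,\wedge_r)$ because orthogonal complementation is an order-reversing automorphism of the subspace lattice, sending $\wedge$-free families to $\vee$-free families and copies of $\vee_r$ to copies of $\wedge_r$.

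\emph{Part (ii).} The key first step is that a butterfly-free family contains no chain of four subspaces, because $W_1\subsetneq W_2\subsetneq W_3\subsetneq W_4$ carries a copy of the butterfly $B$ (send $a,b$ to $W_1,W_2$ and $c,d$ to $W_3,W_4$). Let $\cF_{\min},\cF_{\max}$ be the minimal resp.\ maximal members of $\cF$ and $\cM=\cF\setminus(\cF_{\min}\cup\cF_{\max})$; a short four-chain argument shows $\cM$ is an antichain (so $|\cM|\le N$), and in any copy of $D_r$ the element $a$ lies in $\cF_{\min}$, $c$ in $\cF_{\max}$, and all of $b_1,\dots,b_r$ in $\cM$. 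Thus the number of copies of $D_r$ equals $\sum\binom{|\cM\cap(A,C)|}{r}$ over pairs $A\in\cF_{\min}$, $C\in\cF_{\max}$ with $A\subsetneq C$, where $(A,C)$ is the open interval of subspaces strictly between $A$ and $C$. For $r\ge2$, butterfly-freeness means any two distinct members of $\cF$ have at most one common strict upper bound and at most one common strict lower bound, so any two of the ``blocks'' $\cM\cap(A,C)$ meet in at most one element; hence each $r$-subset of $\cM$ lies in at most one block and the total is at most $\binom{|\cM|}{r}\le\binom Nr$. For $r=1$ (copies of $P_3$) butterfly-freeness forces each $M\in\cM$ to have exactly one strict lower and one strict upper bound in $\cF$ (two lower bounds would, together with $M$ and any upper bound, give a butterfly), so $M$ is the centre of exactly one $3$-chain and the count is $|\cM|\le N$. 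The subspace fan ($\{0\}$, all $\lfloor n/2\rfloor$-dimensional subspaces, $\mathbb F_q^n$) is butterfly-free and has exactly $\binom Nr$ copies of $D_r$.

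\emph{Part (iii).} A $P_r$-free family is $(r-1)$-Sperner, hence a union of $r-1$ antichains. A copy of $\wedge_r$ is a top $B\in\cF$ above $r$ members of $\cF$; since those $r$ members together with $B$ cannot form an $r$-chain, the set $\cF_{<B}$ of members of $\cF$ strictly below $B$ is $(r-2)$-Sperner inside the subspace lattice of $B$, and every $r$-subset of $\cF_{<B}$ yields a valid copy, so the count is exactly $\sum_B\binom{|\cF_{<B}|}{r}$; in particular, for $r=3$ where $\cF_{<B}$ is a single antichain, $|\cF_{<B}|\le{k\brack\lfloor k/2\rfloor}_q$ when $\dim B=k$ by the Sperner theorem inside $B$. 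One then bounds the number of admissible tops of a given dimension using that all the tops form an antichain of subspaces, so essentially only one top-dimension can be ``saturated'' (a single bottom antichain cannot feed two full levels of tops). The matching lower bound is the two-level family of all $k$-dimensional subspaces placed above all $\lfloor k/2\rfloor$-dimensional ones, optimised over $k$. I expect the genuine difficulty to be exactly this optimisation: converting the antichain constraint on the set of tops, together with convexity of $x\mapsto\binom xr$, into the statement that no configuration spread over several top-dimensions beats the extremal two-level one — whereas parts (i) and (ii) come out directly from the structural dichotomies established above.
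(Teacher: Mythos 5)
Your parts (i) and (ii) are correct and follow essentially the same route as the paper: in (i) your map $\sigma$ assigning to each non-maximal member its unique strict superset is the paper's canonical partition of the ($P_3$-free) family in disguise, and in (ii) your ``blocks meet in at most one element'' argument is a mild variant of the paper's observation that each $M\in\cM$ has a \emph{unique} lower bound and a \emph{unique} upper bound in $\cF$, so that each $r$-subset of $\cM$ extends to at most one copy of $D_r$.

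Part (iii) is where there is a genuine gap, in two respects. First, the ``$P_r$'' in the statement is a typo for $P_3$ (the paper's proof works with a $P_3$-free family throughout); read literally with $r\ge 4$ the claimed formula is false, since the union of three suitably chosen levels is $P_4$-free and already contains more copies of $\wedge_r$ than $\max_k{n\brack k}_q\binom{{k\brack\lfloor k/2\rfloor}_q}{r}$, so your excursion into $(r-2)$-Sperner families below each top cannot be made to land on the stated answer. Second, and more importantly, you explicitly leave the final optimisation unresolved (``I expect the genuine difficulty to be exactly this optimisation''), whereas this is precisely the step that must be carried out, and it is short once set up correctly: take the canonical partition $\cF=\cF_1\cup\cF_2$ of a $P_3$-free family; every copy of $\wedge_r$ has its top in $\cF_2$ and its $r$ bottoms in $\cF_1$; the members of $\cF_1$ inside a fixed $k$-dimensional top form an antichain in that top, so the top lies in at most $w(k):=\binom{{k\brack\lfloor k/2\rfloor}_q}{r}$ copies, and the total count is at most $\sum_{F\in\cF_2}w(\dim F)$. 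Since $\cF_2$ is an antichain, the LYM inequality for subspaces gives $\sum_{F\in\cF_2}1/{n\brack \dim F}_q\le 1$, hence $\sum_{F\in\cF_2}w(\dim F)\le\bigl(\max_k{n\brack k}_q\,w(k)\bigr)\sum_{F\in\cF_2}1/{n\brack\dim F}_q\le\max_k{n\brack k}_q\,w(k)$. No convexity of $x\mapsto\binom{x}{r}$ and no ``only one saturated top-dimension'' argument is needed: $w$ is just a nonnegative weight depending only on the dimension, and the profile polytope of antichains of subspaces (equivalently, LYM) does all the work.
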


The Boolean analogues of the above statements were proved in \cite{GKP}, and the proofs of them also work in our case. We include them for sake of completeness. We will use the \textit{canonical partition} of $k$-Sperner families $\cF$; it is a partition into $k$ antichains $\cF_1,\dots, \cF_k$, where $\cF_i$ is the set of minimal elements of $\cF\setminus \cup_{j=1}^{i-1} \cF_j$.

\begin{proof} The lower bounds for \textbf{(i)} and \textbf{(ii)} are given by the families consisting of all the $\lfloor n/2\rfloor$-dimensional subspaces together with the zero-dimensional and/or the $n$-dimensional subspace. For \textbf{(iii)} consider all the $k$-dimensional and $\lfloor k/2\rfloor$-dimensional subspaces for every $k$.

For the upper bound in $\textbf{(i)}$, the first equality is trivial by symmetry. Let us consider now the canonical partition $\cF_1\cup \cF_2$ of a $\vee$-free family $\cF$. Observe that every copy of $\wedge_r$ consists of a member of $\cF_2$, and $r$ members of $\cF_1$ contained in it. Every member of $\cF_1$ is contained in at most one member of $\cF_2$ by the $\vee$-free property, thus for every set of $r$ members of $\cF_1$, at most one member of $\cF_2$ forms a copy of $\wedge_r$ with them. This implies  $La_q(n,\vee, {\wedge}_r)\le \binom{|\cF_1|}{r}$. As $\cF_1$ is an antichain, it has at most ${n \brack \lfloor n/2\rfloor}_q$ members, finishing the proof of  \textbf{(i)}. 

To prove the upper bound in \textbf{(ii)}, let $\cF$ be a $B$-free family of subspaces and $\cM=\{M\in\cF: \exists F',F''\in \cF \,\,\textrm{such that}\, F'< M< F''\}$. As $\cF$ is $P_4$-free, $\cM$ is an antichain. Observe that for an $M\in \cM$ there is exactly one $F'\in \cF$ with $F'<M$ and there is exactly one $F''\in \cF$ with $M<F''$. Thus, for every $r$-tuple from $\cM$ there is at most one copy of $D_r$ in $\cF$, and there are at most ${{n \brack\lfloor n/2\rfloor}_q\choose r}$ such $r$-tuples.

To prove the upper bound in \textbf{(iii)}, let $\cF$ be a $P_3$-free family of subspaces and consider its canonical partition $\cF_1\cup\cF_2$. Every copy of $\wedge_r$ consists of a member of $\cF_2$ and $r$ members of $\cF_1$. For a member $F$ of $\cF_2$ with dimension $k$, we have to pick $r$ subspaces of it that are in $\cF_1$. Those members of $\cF_1$ that can be picked form an antichain of subspaces of a $k$-dimensional space, thus there are at most ${i\brack \lfloor k/2\rfloor}_q$ of them, and there are ${n\brack k}\binom{{k\brack \lfloor k/2\rfloor}_q}{r}$ ways to pick $r$ of them. It means that a $k$-dimensional member of $\cF_2$ is in at most $w(k):=\binom{{k\brack \lfloor k/2\rfloor}_q}{r}$ copies of $\wedge_r$. Hence the total number of copies of $\wedge_r$ is at most the total weight of $\cF_2$, i.e. $w(\cF_2)$. As $\cF_2$ is an antichain, this is maximized by a level (for a number of reasons mentioned earlier, for example Corollary \ref{trivi} implies this). The weight of level $k$ is ${n\brack k}\binom{{k\brack \lfloor k/2\rfloor}_q}{r}$, finishing the proof.

\end{proof}


\begin{thebibliography}{}

\bibitem{AS}
\textsc{N. Alon, C. Shikhelman}, Many $T$-copies in $H$-free graphs, J. Combinatorial Theory, Ser. B, 121 (2016), 146--172.

 \bibitem{B2009}
\textsc{B. Bukh}, Set families with a forbidden subposet, Electronic Journal of Combinatorics, 16(1) (2009) P142.

 \bibitem{BN}
 \textsc{P. Burcsi, D. Nagy}, The method of double chains for largest families with excluded subposets, Electronic Journal of Graph Theory and Applications, 1 (2013) 40--49.
 
\bibitem{DKS}
\textsc{A DeBonis, G.O.H. Katona, K. Swanepoel}, Largest family without $A\cup B \subset C\cap D$, J. Combin. Theory Ser. A 111 (2005) 331--336.

\bibitem{engel} \textsc{K. Engel}, Sperner Theory, Encyclopedia of Mathematics and its Applications,
65. Cambridge University Press, Cambridge, 1997. x+417 pp.



\bibitem{EKR} \textsc{P. Erd\H{o}s, Chao Ko, R. Rado}, Intersection theorems for systems of finite sets. \textit{Quart. J. Math. Oxford}, \textbf{12}, 313--320, 1961.

\bibitem{EFK}
\textsc{P.L. Erd\H os, P. Frankl, G.O.H. Katona}, Intersecting Sperner families and their convex hulls, Combinatorica 4 (1984) 21--34.

\bibitem{EFK2}
\textsc{P.L. Erd\H os, P. Frankl, G.O.H. Katona}, Extremal hypergraph problems and convex hulls. Combinatorica, 5(1), 11--26.

\bibitem{gerb} \textsc{D. Gerbner}, A k\"orm\'odszer az extrem\'alis kombinatorik\'aban (in Hungarian), Master's thesis, ELTE, Budapest, Hungary, 2004.

\bibitem{gerbner} \textsc{D. Gerbner}, Profile polytopes of some classes of families, Combinatorica 33 (2013) 199--216.

\bibitem{GKP}
\textsc{D. Gerbner, B. Keszegh, B. Patk\'os.} Generalized forbidden subposet problems, arxiv:1701.05030

\bibitem{GMNPV} \textsc{D. Gerbner, A. Methuku, D.T. Nagy, B. Patk\'os, M. Vizer} (2017). Forbidding rank-preserving copies of a poset. Order, 1-10.

\bibitem{GMNPV2} \textsc{D. Gerbner, A. Methuku, D.T. Nagy, B. Patk\'os, M. Vizer}, On the number of containments in P-free families, arXiv:1804.01606

\bibitem{GP}
\textsc{D. Gerbner, B. Patk\'os}, $l$-chain profile vectors, SIAM J. Discrete Math. 22 (2008) 185--193. 

\bibitem{GP2}
\textsc{D. Gerbner, B. Patk\'os}, (2009). Profile vectors in the lattice of subspaces. Discrete Mathematics, 309(9), 2861--2869.

\bibitem{gp} \textsc{D. Gerbner, B. Patk\'os}, Extremal Finite Set Theory,
1st Edition, CRC Press, 2018.

\bibitem{gk} \textsc{C. Greene, D.J. Kleitman}, Proof techniques in the theory of finite sets.
Studies in combinatorics, pp. 22--79 MAA Stud. Math., 17, Math. Assoc. America,
Washington, D.C., 1978.


\bibitem{GLi}
\textsc{J.R. Griggs and W.-T. Li}, Progress on poset-free families of subsets, in: Recent Trends in Combinatorics, 2016, 317--338.

\bibitem{GL}
\textsc{J. R. Griggs and L. Lu}, On families of subsets with a forbidden subposet, J. Combinatorial Theory (Ser. A) 119 (2012), 310–-322.

\bibitem{gmt} \textsc{D. Gr\'osz, A. Methuku, C. Tompkins}, (2018). An upper bound on the size of diamond-free families of sets. Journal of Combinatorial Theory, Series A, 156, 164--194.

\bibitem{hsieh} \textsc{W.N. Hsieh}, Intersection theorems for systems of finite vector spaces. Discrete
Math. 12 (1975), 1--16.

\bibitem{ht} \textsc{F. Holroyd, J. Talbot}, (2005). Graphs with the Erd\H os--Ko--Rado property. Discrete mathematics, 293(1--3), 165--176.

\bibitem{kat} \textsc{G. O. H. Katona}, A simple proof of the Erd\H os--Chao Ko--Rado theorem, J. Combinatorial
Theory Ser. B 13 (1972), 183--184.

\bibitem{KT}
\textsc{G.O.H. Katona and T. Tarj\'an,} Extremal problems with excluded subgraphs in the $n$-cube, Graph Theory, Lagow, 1981, Lecture Notes in Math. 1018 (Springer-Verlag, Berlin, 1983) 84--93.

\bibitem{lubell} \textsc{D. Lubell}, A short proof of Sperner's lemma, J. Combinatorial Theory 1(2) (1966), 299.

\bibitem{mesh} \textsc{L.D. Meshalkin}, A generalization of Sperner’s theorem on the number of subsets of a finite set, Teor.
Verojatnost. i Primen. 8 (1963) 219--220 (in Russian with German summary).

\bibitem{sash} \textsc{G. Sarkis, S. Shahriari, PCURC} (2014). Diamond-free subsets in the linear lattices. Order, 31(3), 421--433. 

\bibitem{sy} \textsc{S. Shahriari, S.  Yu}, (2018). Avoiding Brooms, Forks, and Butterflies in the Linear Lattices. arXiv preprint arXiv:1807.06259.

\bibitem{S}
\textsc{E. Sperner}, Ein Satz \"uber Untermengen einer endlichen Menge, Mathematische Zeitschrift 27 (1928), 544--548.

\bibitem{yam}
\textsc{K. Yamamoto}, Logarithmic order of free distributive lattices, J. Math. Soc. Japan 6 (1954) 347--357.

\end{thebibliography}
\end{document}